\newtheorem{thm}{Theorem}
\newtheorem{lemma}{Lemma}
\begin{document}
\title[Growth and repair]
{Volume growth and puncture repair in conformal geometry}
\author[M.G. Eastwood]{Michael G. Eastwood}
\address{\hskip-\parindent
School of Mathematical Sciences\\
University of Adelaide\\ 
SA 5005\\ 
Australia}
\email{meastwoo@member.ams.org}
\author[A.R. Gover]{A. Rod Gover}
\address{Department of Mathematics\\
University of Auckland\\
Private Bag 92019\\ 
Auckland 1142\\
New Zealand}
\email{r.gover@auckland.ac.nz}
\subjclass{53A30}
\begin{abstract} Suppose $M$ is a compact Riemannian manifold and $p\in M$ an
arbitrary point. We employ estimates on the volume growth around $p$ to prove
that the only conformal compactification of $M\setminus\{p\}$ is $M$ itself.
\end{abstract}
\renewcommand{\subjclassname}{\textup{2010} Mathematics Subject Classification}

\thanks{We gratefully acknowledge support from the Royal Society of
New Zealand via Marsden Grants 13-UOA-018 and 16-UOA-051. We also thank the
Universities of Adelaide and Auckland for hospitality during various visits.}

\maketitle
\section{Introduction}
Though this article is primarily concerned with conformal differential geometry
in dimension $\geq3$, the phenomenon we wish to describe also occurs in
dimension~$2$ as follows. 
\begin{thm}\label{1} Suppose that $M$ is a compact connected Riemann surface
and $p\in M$. Suppose that $N$ is a compact connected Riemann surface and
$U\subset N$ an open subset such that $U\cong M\setminus\{p\}$ as Riemann
surfaces. Then this isomorphism extends to $N\cong M$.
\end{thm}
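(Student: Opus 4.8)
The plan is to extend the given isomorphism holomorphically across the puncture and then to close up by a connectedness argument. Regard the isomorphism as an injective holomorphic map $\phi\colon M\setminus\{p\}\to N$ with image $U$. First I would fix a holomorphic coordinate centred at $p$, let $\Delta^*=\{0<|z|<1\}$ be the corresponding punctured-disc neighbourhood of the puncture, and set $f=\phi|_{\Delta^*}\colon\Delta^*\to N$, an injective holomorphic map. Because $\phi$ is a homeomorphism carrying compacta to compacta and $M\setminus\{p\}$ has a single end, $f(z)$ eventually leaves every compact subset of $U$ as $z\to0$; consequently the cluster set $E=\bigcap_{0<r<1}\overline{f(\{0<|z|<r\})}$ is a nonempty compact subset of $K:=N\setminus U$.

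The heart of the argument---and the step I expect to be the main obstacle---is the extension lemma: \emph{$f$ extends to a holomorphic map $\overline{f}\colon\{|z|<1\}\to N$.} This is equivalent to showing that $E$ is a single point, since nested compactness then forces $f(z)$ to converge as $z\to0$, whereupon boundedness in a coordinate chart and Riemann's removable-singularity theorem give the extension. To rule out that $E$ contains two distinct points $a\neq b$, I would use that a compact Riemann surface is projective, so its meromorphic functions separate points: choose a non-constant meromorphic $h$ on $N$ with $h(a)\neq h(b)$. Then $g=h\circ f$ is meromorphic on $\Delta^*$, and its cluster set at $0$ contains the two distinct values $h(a),h(b)$, so $0$ is an essential singularity of $g$. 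By Picard's great theorem $g$ assumes, infinitely often in every punctured neighbourhood of $0$, each value $c$ outside an exceptional set of at most two points. Picking such a $c$ that is also a regular value of $h$, the fibre $h^{-1}(c)$ is finite, so by the pigeonhole principle infinitely many of the corresponding preimages $z_n$ satisfy $f(z_n)=w$ for one fixed $w\in h^{-1}(c)$; this contradicts the injectivity of $f$. Hence $E$ is a single point and $f$ extends.

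With the lemma in hand the rest should be routine. Writing $q=\overline{f}(0)\in E\subseteq K$, so $q\notin U$, injectivity of $f$ forces $\overline{f}'(0)\neq0$ (otherwise $\overline{f}$ would be $d$-to-one near $0$ for some $d\geq2$, contradicting injectivity on a punctured disc), so $\overline{f}$ carries a small disc biholomorphically onto an open set $V\ni q$ with $V\setminus\{q\}\subset U$. Hence $U\cup\{q\}=U\cup V$ is open in $N$. Gluing $\phi$ to $\overline{f}$ produces a holomorphic map $\overline{\phi}\colon M\to N$ with $\overline{\phi}(p)=q$, extending the isomorphism. Its image $\overline{\phi}(M)=U\cup\{q\}$ is compact, hence closed, and also open, so by connectedness of $N$ it is all of $N$; since $\overline{\phi}$ is injective (as $q\notin U$) and holomorphic between compact Riemann surfaces, it is the desired isomorphism $M\cong N$ extending $\phi$.
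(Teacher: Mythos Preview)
Your argument is correct, but it follows a genuinely different route from the paper's. You proceed via classical one-variable complex analysis: separate two putative cluster points by a meromorphic function on $N$, invoke Picard's great theorem to produce infinitely many preimages of a generic value, use finiteness of the fibre to contradict injectivity of $f$, and then finish with Riemann's removable-singularity theorem and a connectedness argument. The paper instead gives a purely \emph{metric} proof. Choosing a representative metric in the conformal class and polar co\"ordinates $(r,\theta)$ near the puncture, one observes that if $\partial U$ contained two or more points then the concentric circles $\{r=\epsilon\}$ would have length bounded below, say by $\ell>0$, in the ambient metric $\hat\eta=\Omega^2\eta$ extending to $N$; a single application of Cauchy--Schwarz then yields $\int_0^\epsilon\!\int_0^{2\pi}\Omega^2\,r\,d\theta\,dr\geq(\ell^2/2\pi)\int_0^\epsilon dr/r=\infty$, contradicting finiteness of area on the compact $N$. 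Your approach has the virtue of staying entirely within the standard toolkit of compact Riemann surfaces; the paper's is more elementary in its ingredients (no Picard, no appeal to the existence of separating meromorphic functions) and, more to the point here, is deliberately designed so that the same volume-growth estimate carries over to conformal manifolds of dimension $n\geq 3$, which is the paper's real objective and where your complex-analytic tools have no analogue.
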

Stated more informally, there is no difference between the `punctured Riemann
surface' $M\setminus\{p\}$ and the `marked Riemann surface' $(M,p)$. In fact,
although we have stated the theorem in terms of {\em compact\/} Riemann
surfaces, the result itself is local:
\begin{equation}\label{picture}\raisebox{-27pt}{\setlength{\unitlength}{.7cm}
\begin{picture}(6,3)(.1,0.45)
\linethickness{0.4mm}
\qbezier(5.8,2.0)(5.8,2.3728)(4.9799,2.6364)
\qbezier(4.9799,2.6364)(4.1598,2.9)(3.0,2.9)
\qbezier(3.0,2.9)(1.8402,2.9)(1.0201,2.6364)
\qbezier(1.0201,2.6364)(0.2,2.3728)(0.2,2.0)
\qbezier(0.2,2.0)(0.2,1.6272)(1.0201,1.3636)
\qbezier(1.0201,1.3636)(1.8402,1.1)(3.0,1.1)
\qbezier(3.0,1.1)(4.1598,1.1)(4.9799,1.3636)
\qbezier(4.9799,1.3636)(5.8,1.6272)(5.8,2.0)
\put(3,2){\makebox(0,0){$\bullet$}}
\end{picture}\enskip\raisebox{26pt}{\large$\cong$}\qquad
\begin{picture}(6,4)(0,1.2)
\linethickness{0.1mm}
\qbezier(5.8,2.0)(5.8,2.3728)(4.9799,2.6364)
\qbezier(4.9799,2.6364)(4.1598,2.9)(3.0,2.9)
\qbezier(3.0,2.9)(1.8402,2.9)(1.0201,2.6364)
\qbezier(1.0201,2.6364)(0.2,2.3728)(0.2,2.0)
\linethickness{0.4mm}
\qbezier(0.2,2.0)(0.2,1.6272)(1.0201,1.3636)
\qbezier(1.0201,1.3636)(1.8402,1.1)(3.0,1.1)
\qbezier(3.0,1.1)(4.1598,1.1)(4.9799,1.3636)
\qbezier(4.9799,1.3636)(5.8,1.6272)(5.8,2.0)
\linethickness{0.25mm}
\qbezier(5.8,2)(5.8,3)(4,4)
\qbezier(4,4)(2.2,5)(1,5)
\qbezier(1,5)(.2,5)(.2,2)
\qbezier(1.2,4.4)(1.7,4.5)(2.2,4.2)
\qbezier(.9,4.6)(1.7,3.8)(2.6,4.3)
\qbezier(.3,3.5)(1,3.3)(2.8,2.8)
\qbezier(2.8,2.8)(4.6,2.3)(5,3.3)
\linethickness{0.1mm}
\qbezier(.3,3.5)(2.5,4.3)(5,3.3)
\put(-.3,2.75){\makebox(0,0){U$\left\{\rule{0pt}{18pt}\right.$}}
\put(6.45,3.45){\makebox(0,0){$\left.\rule{0pt}{34pt}\right\}N$}}
\end{picture}}\end{equation}
In this picture the punctured open disc is assumed to be conformally isomorphic
to the open set $U$ (but nothing is supposed concerning the boundary
$\partial{U}$ of $U$ in~$N$). We may conclude that $N$ must be, in fact, be the
disc and $U\hookrightarrow N$ the punctured disc, tautologically included. For
simplicity, however, the results in this article will be formulated for compact
manifolds, their local counterparts being left to the reader.

By a {\em conformal\/} manifold we shall mean a smooth manifold equipped 
with an equivalence class of Riemannian metrics $[g_{ab}]$ where the notion of 
equivalence is that $\hat{g}_{ab}=\Omega^2g_{ab}$ for some positive smooth 
function~$\Omega$.  

\begin{thm}\label{2} Suppose $M$ is a compact connected conformal manifold and
$p\in M$. Suppose $N$ is a compact connected conformal manifold and $U\subset
N$ an open subset such that $U\cong M\setminus\{p\}$ as conformal manifolds.
Then this isomorphism extends to $N\cong M$.
\end{thm}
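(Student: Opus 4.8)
The plan is to concentrate everything at the puncture and to show that the frontier $\partial U=\overline U\setminus U$ of $U$ in $N$ is a single point; once this is known, removing the puncture becomes a standard removable-singularity matter. First I would fix representative metrics $g\in[g_{ab}]$ on $M$ and $\hat g\in[\hat g_{ab}]$ on $N$, and record the volume growth that the title advertises: in geodesic normal coordinates at $p$ one has $\mathrm{Vol}_g(B_g(p,r))=O(r^n)$, while compactness of $N$ gives a uniform two-sided bound $c\,r^n\le\mathrm{Vol}_{\hat g}(B_{\hat g}(x,r))\le C\,r^n$. A short point-set argument shows that $\partial U\neq\emptyset$ and that every $z\in\partial U$ is a limit $\phi(x_k)\to z$ with $x_k\to p$, where $\phi\colon M\setminus\{p\}\to U$ is the given conformal isomorphism; consequently $\partial U\subseteq\overline{\phi(B_g(p,r)\setminus\{p\})}$ for every $r>0$, and therefore
\[
\mathrm{diam}_{\hat g}(\partial U)\;\le\;\mathrm{diam}_{\hat g}\!\left(\phi\big(B_g(p,r)\setminus\{p\}\big)\right)\qquad\text{for all }r>0 .
\]
Thus it suffices to prove that the right-hand side tends to $0$ as $r\to0$.

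This is the crux, and the step I expect to be the main obstacle. The tool is the conformal $n$-capacity, which in dimension $n$ is a genuine conformal invariant because the energy density $|\nabla u|_g^{\,n}\,dV_g$ is unchanged under $g\mapsto\Omega^2g$. Consider, for a fixed small $R$ and variable $r<R$, the spherical condenser $\big(\overline{B_g(p,r)},\,M\setminus B_g(p,R)\big)$, whose admissible functions may be taken locally constant outside the annulus $A=B_g(p,R)\setminus\overline{B_g(p,r)}$, on which $\phi$ is a conformal isomorphism; invariance of capacity then gives
\[
\mathrm{cap}\big(\overline{B_g(p,r)},\,M\setminus B_g(p,R)\big)\;=\;\mathrm{cap}\big(E_r,\,N\setminus\phi(B_g(p,R))\big),\qquad E_r:=\overline{\phi\big(B_g(p,r)\setminus\{p\}\big)} .
\]
The left-hand side is the capacity of the point $p$ from radius $r$ to $R$; the volume growth $\mathrm{Vol}_g(B_g(p,r))=O(r^n)$ forces it to behave like $(\log(R/r))^{1-n}$ and hence to tend to $0$ as $r\to0$ — the precise sense in which a point has zero conformal capacity. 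On the other side, since $\phi$ is a homeomorphism the continuum $E_r$ contains $\partial U$, and $(N,\hat g)$, being a smooth compact manifold, is a Loewner space: for continua of diameter bounded below by $\delta>0$ at bounded distance, the connecting modulus — equivalently the capacity — is bounded below by a positive constant $c(\delta)$ depending only on $\delta$ and the geometry of $N$. This lower bound is itself a volume-growth statement, resting on $\mathrm{Vol}_{\hat g}(B_{\hat g}(x,r))\ge c\,r^n$. If $\partial U$ had diameter $2\delta>0$, then $\mathrm{diam}_{\hat g}(E_r)\ge2\delta$ for every $r$, so the right-hand capacity would stay $\ge c(\delta)>0$, contradicting the vanishing of the left-hand side. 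Hence $\mathrm{diam}_{\hat g}(\partial U)=0$ and $\partial U$ is a single point $q$.

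With $\partial U=\{q\}$ the remainder is soft. Since $N$ is connected and $n\ge2$, the set $N\setminus\{q\}$ is connected and is partitioned into the disjoint open sets $U$ and $N\setminus\overline U$; as $U\neq\emptyset$ the exterior must be empty, so $U=N\setminus\{q\}$. Extending $\phi$ by $\phi(p)=q$ gives a bijection $\bar\phi\colon M\to N$, continuous at $p$ precisely because of the diameter bound just established, and a continuous bijection from a compact space to a Hausdorff space is a homeomorphism. It remains to upgrade $\bar\phi$ — a homeomorphism that is conformal off the single point $p$ — to a conformal diffeomorphism across $p$. This is a removable-singularity assertion for conformal maps: I would obtain the smoothness and conformality at $p$ from the regularity theory of $1$-quasiconformal mappings, which are smooth and, for $n\ge3$, rigid of Liouville type, the singleton $\{p\}$ being negligible for the relevant $W^{1,n}$-capacity; in dimension $2$ one may instead invoke Theorem~\ref{1} directly. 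Once $\bar\phi$ and $\bar\phi^{-1}$ are seen to be smooth and conformal at the puncture, the conformal factor extends smoothly and positively, completing the identification $N\cong M$.
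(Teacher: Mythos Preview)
Your proposal is essentially correct and reaches the same conclusion that $\partial U$ must be a single point, but it takes a genuinely different route from the paper. You invoke the conformal $n$-capacity (a conformal invariant), show that the capacity of the shrinking collar on the $M$ side tends to zero because a point has zero $n$-capacity, and then appeal to the Loewner property of the compact manifold $(N,\hat g)$ to get a positive lower bound whenever $\partial U$ has positive diameter; you then finish with quasiconformal removable-singularity regularity. The paper, by contrast, avoids the capacity/Loewner machinery entirely and argues by a direct volume estimate: if $\partial U$ contains two points, then the concentric spheres $\{r=\epsilon\}$ have $\hat g$-diameter bounded below by some $\ell>0$, and a key lemma (proved via H\"older's inequality in stereographic coordinates) converts the bound $\int_\alpha^\beta\Omega\geq \ell/r$ on every path into $\int_\Sigma\Omega^n\geq C_n(\ell/r)^n$, forcing the $\hat g$-volume of the collar $\{0<r<\epsilon\}$ to diverge like $\int_0^\epsilon r^{-1}\,dr$. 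The paper then reduces the general Riemannian case to this near-Euclidean computation by a simple choice of coordinates. Your approach is the standard one from geometric function theory and has the advantage of being conceptual and immediately portable to other Loewner spaces; the paper's approach buys a completely elementary and self-contained argument, with the H\"older/volume lemma replacing both the Loewner inequality and the capacity formalism. One small point to tighten in your write-up: the Loewner lower bound needs both plates of the condenser to be continua, so you should specify a connected competitor for the outer plate (for instance $\phi(\partial B_g(p,R))$) rather than the possibly disconnected set $N\setminus\phi(B_g(p,R))$.
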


Since an oriented conformal structure in $2$ dimensions is the same as a
complex structure, Theorem~\ref{2} generalises Theorem~\ref{1}. It is well
known, however, that conformal geometry in dimensions $\geq3$ enjoys a greater
rigidity than in $2$ dimensions and so one expects a different proof. Such
proofs of Theorem~\ref{2} (and beyond) can be found in~\cite{F}. 
In this article, however, we shall prove Theorem~\ref{2} by a method that 
also works (but much more easily so) in dimension~$2$. 

\section{Puncture repair in $2$ dimensions}
\noindent{\em Proof of Theorem~\ref{1}.}\enskip With reference to
picture~(\ref{picture}), introducing polar co\"ordinates $(r,\theta)$ on the
disc and hence on~$U$, we are confronted by a smooth positive function
$\Omega(r,\theta)$ so that, if the $\eta_{ab}$ denotes the standard metric
$dr^2+r^2d\theta^2$ on the disc, then the metric
$\hat{\eta}_{ab}=\Omega^2\eta_{ab}$ extends to~$N$. If $\partial U\subset N$
contains two or more points, then the concentric curves $\{r=\epsilon\}$ as
$\epsilon\downarrow 0$ have length bounded away from zero in the metric
$\hat{\eta}_{ab}$. In other words, for some~$\epsilon>0$ and $\ell>0$, we have
$$\int_0^{2\pi}\Omega(r,\theta)\,r\,d\theta\geq\ell,
\qquad\forall\,0<r<\epsilon.$$
By the Cauchy-Schwarz inequality, for any fixed~$r$,
$$\left(\int_0^{2\pi}\Omega(r,\theta)\,d\theta\right)^2\leq
2\pi\int_0^{2\pi}\Omega^2(r,\theta)\,d\theta$$
and it follows that
$$\int_0^\epsilon\!\int_0^{2\pi}\Omega^2\,d\theta\,r\,dr\geq
\int_0^\epsilon\frac1{2\pi}
\left(\int_0^{2\pi}\Omega\,d\theta\right)^2\!r\,dr\geq
\frac1{2\pi}\int_0^\epsilon\frac{\ell^2}{r}\,dr=\infty.$$
However, the integral on the left is the area of $\{0<r<\epsilon\}\subseteq U$
in the metric $\hat\eta_{ab}$, which must be finite if $\hat\eta_{ab}$
is to extend smoothly to~$N$. \hfill$\square$

\section{Puncture repair in Euclidean $n$-space}
In $2$ dimensions, the local existence of isothermal co\"ordinates implies that
is it sufficient to repair only the unit disc in ${\mathbb{R}}^2$ with its
standard metric~$\eta_{ab}$. Such a normalisation is unavailable in higher
dimensions.

\noindent{\em Proof of Theorem~\ref{2} in flat space.}\enskip With reference
to~(\ref{picture}), now viewed as a picture in $n$ dimensions, we shall suppose
that the object on the left is a punctured ball in ${\mathbb{R}}^n$ with its
standard Euclidean metric and aim to conclude, just as we did in case $n=2$,
that $\partial U\subset N$ is a single point. To do this, we replace polar
co\"ordinates by spherical co\"ordinates
$${\mathbb{R}}_{>0}\times\Sigma\ni(r,x)\mapsto
rx\in{\mathbb{R}}^n\setminus\{0\},$$
where
$$\Sigma=\{x\in{\mathbb{R}}^n\mid\|x\|=1\}
\hookrightarrow{\mathbb{R}}^n\setminus\{0\}$$
is the unit $(n-1)$-sphere and investigate the behaviour of a smooth positive
function $\Omega=\Omega(r,x)$ defined for $r$ sufficiently small and having the
property that the metric $\hat\eta_{ab}=\Omega^2\eta_{ab}$ extends to~$N$. If 
$\partial U\subset N$ contains two or more points, then the concentric 
hypersurfaces $\{r=\epsilon\}$ as $\epsilon\downarrow0$ have diameter bounded 
away from zero in the metric~$\hat\eta_{ab}$. In other words, for some 
$\epsilon>0$ and $\ell>0$, we have
$$\forall\,0<r<\epsilon,\quad\mbox{there are $\alpha,\beta\in\Sigma$}\quad
\mbox{s.t.\ }\int_\alpha^\beta\Omega(r,x)\,r \geq\ell,$$
where the integral is along any path from $\alpha$ to $\beta$ on the unit
sphere~$\Sigma$ (with respect to the standard round metric on~$\Sigma$).

\begin{lemma}\label{keylemma} Suppose $\Omega:\Sigma\to{\mathbb{R}}_{>0}$ is
smooth and there are two points $\alpha,\beta\in\Sigma$ such that
$\int_\alpha^\beta\Omega\geq d>0$ for all smooth paths on $\Sigma$ joining
$\alpha$ to~$\beta$. Then
$$\int_\Sigma\Omega^n\geq C_n\,d^n,$$
where $C_n$ is a universal constant, independent of the location 
of~$\alpha,\beta$.
\end{lemma}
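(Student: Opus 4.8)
The plan is to reinterpret the hypothesis geometrically: writing $\hat g=\Omega^2 g_\Sigma$ for the round metric $g_\Sigma$ rescaled by $\Omega^2$, the condition $\int_\alpha^\beta\Omega\geq d$ along every path says precisely that the $\hat g$-distance from $\alpha$ to $\beta$ is at least $d$, while the quantity to be bounded below, $\int_\Sigma\Omega^n$, is an $L^n$-norm of $\Omega$ on the $(n-1)$-dimensional manifold $\Sigma$. The decisive structural feature is that the exponent $n$ \emph{exceeds} the dimension $n-1$: concentrating $\Omega$ near a point is then expensive rather than cheap, so one expects a clean lower bound to hold. I would reduce the estimate to a one-dimensional inequality by sweeping $\Sigma$ out with a family of paths joining $\alpha$ to $\beta$, applying a H\"older inequality along each path, and then integrating over the family against the induced transverse measure.

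The heart of the matter is a \emph{weighted} H\"older inequality, which I would first carry out in the model case $\beta=-\alpha$. Using geodesic polar co\"ordinates $(\phi,\omega)$ centred at $\alpha$, with $\phi\in(0,\pi)$ and $\omega\in S^{n-2}$, the round metric is $d\phi^2+\sin^2\phi\,g_{S^{n-2}}$ and its volume element is $\sin^{n-2}\phi\,d\phi\,dV_{S^{n-2}}$. For each fixed direction $\omega$ the radial geodesic $\phi\mapsto\exp_\alpha(\phi\,\omega)$ is a unit-speed path from $\alpha$ to $\beta=-\alpha$, so the hypothesis gives $\int_0^\pi\Omega\,d\phi\geq d$. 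Splitting $\Omega=(\Omega\sin^{(n-2)/n}\phi)\cdot\sin^{-(n-2)/n}\phi$ and applying H\"older with exponents $n$ and $n/(n-1)$ yields
\[ d\le\int_0^\pi\Omega\,d\phi\le\left(\int_0^\pi\Omega^n\sin^{n-2}\phi\,d\phi\right)^{1/n}\left(\int_0^\pi\sin^{-(n-2)/(n-1)}\phi\,d\phi\right)^{(n-1)/n}. \]
Here the second factor is a finite constant $K_n$, because the exponent $(n-2)/(n-1)$ is strictly less than $1$, so that $\sin^{-(n-2)/(n-1)}\phi$ is integrable at $\phi=0,\pi$; it is exactly at this step that the super-critical exponent $n>n-1$ is used, the borderline exponent $n-1$ rendering the weight non-integrable. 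Rearranging gives $\int_0^\pi\Omega^n\sin^{n-2}\phi\,d\phi\geq d^n/K_n^{\,n-1}$ for every $\omega$, and integrating over $\omega\in S^{n-2}$ produces $\int_\Sigma\Omega^n\geq(|S^{n-2}|/K_n^{\,n-1})\,d^n$, which is the assertion with an explicit universal constant.

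The main obstacle is the general position $\beta\neq-\alpha$, where the radial geodesics from $\alpha$ no longer all terminate at $\beta$. I see two ways to close this gap, both respecting the required independence of the constant from the location of $\alpha,\beta$. The first is to sweep $\Sigma$ out by the broken paths that run radially from $\alpha$ and then along a geodesic sphere $\{\phi=\mathrm{const}\}$ to $\beta$, and to control the contribution of the spherical arcs; the weighted-H\"older mechanism above then governs the radial portions. The second, cleaner, route is to recognise the sought inequality as Morrey's inequality for $u=d_{\hat g}(\alpha,\cdot)$ on $\Sigma=S^{n-1}$: since $|\nabla u|_{g_\Sigma}=\Omega$ almost everywhere and $u(\beta)-u(\alpha)\geq d$, the Morrey estimate $|u(\beta)-u(\alpha)|\leq C\,d_{g_\Sigma}(\alpha,\beta)^{1/n}\,(\int_\Sigma\Omega^n)^{1/n}$ together with $d_{g_\Sigma}(\alpha,\beta)\leq\mathrm{diam}\,\Sigma=\pi$ gives $\int_\Sigma\Omega^n\geq d^n/(C^n\pi)$ at once, with $C$ depending only on $\Sigma$. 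I would present the polar-co\"ordinate computation as the conceptual core and invoke the diameter bound $\pi$ to absorb the position of $\beta$.
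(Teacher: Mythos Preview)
Your proposal is correct. The antipodal computation in geodesic polar co\"ordinates is clean and, structurally, is precisely the paper's argument specialised to $\beta=-\alpha$: sweep $\Sigma$ out by a family of curves from $\alpha$ to $\beta$, apply H\"older with a weight matched to the volume element so that integrating over the transverse directions reassembles $\int_\Sigma\Omega^n$, and observe that the auxiliary integral $\int_0^\pi\sin^{-(n-2)/(n-1)}\phi\,d\phi$ converges exactly because the exponent $n$ exceeds the dimension $n-1$. Your Morrey route for general $\alpha,\beta$ is also valid: $u=d_{\hat g}(\alpha,\cdot)$ is Lipschitz, hence in $W^{1,n}(\Sigma)$, with $|\nabla u|_{g_\Sigma}=\Omega$ almost everywhere, and the Morrey--Sobolev embedding on $S^{n-1}$ gives the bound with a constant depending only on~$\Sigma$. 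Your first alternative (broken paths) is left as a sketch, so it is really the Morrey argument that carries the non-antipodal case in your write-up.

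Where your approach and the paper's diverge is in the handling of general $\alpha,\beta$. The paper remains self-contained: it passes to stereographic co\"ordinates, translated by some $a\in\mathbb{R}^{n-1}$ so that $\alpha$ lies at the origin and $\beta$ at infinity, and then runs the same H\"older-along-rays argument for \emph{every} pair $\alpha,\beta$. The price is that the H\"older weight now depends on $a$, and the substance of the paper's proof is a separate lemma showing that
\[
\int_0^\infty\frac{dx}{x^{1/p}\bigl(1+(x+t)^2\bigr)^{1/q}}
\]
is bounded uniformly in $t\in\mathbb{R}$ for conjugate exponents $p,q$, established via Young's inequality. You trade that explicit one-variable estimate for an appeal to Morrey's inequality: conceptually tidier and automatically position-independent, but it imports a heavier tool where the paper gives an elementary, hands-on bound. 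In effect, your antipodal H\"older computation \emph{is} a proof of the relevant instance of Morrey, and the paper may be read as extending that computation directly to arbitrary $\alpha,\beta$ rather than quoting Morrey as a black box.
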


\noindent The proof of this lemma is given in an appendix. To finish the proof 
of our theorem, we compute the volume of the collar $\{0<r<\epsilon\}$ with 
respect to the metric $\hat\eta_{ab}=\Omega^2\eta_{ab}$ as
$$\int_0^\epsilon\!\int_\Sigma\Omega^n\,r^{n-1}\,dr\geq
\int_0^\epsilon C_n\left(\frac{\ell}{r}\right)^nr^{n-1}\,dr
=C_n\int_0^\epsilon\frac{\ell^n}{r}\,dr=\infty,$$
which should be finite if $\hat\eta_{ab}$ is to extend smoothly to~$N$.
\hfill$\square$

\medskip\noindent{\em Remark.} By stereographic projection, conformally
repairing a puncture in Euclidean ${\mathbb{R}}^n$ is equivalent to conformally
repairing a puncture in the round sphere~$S^n$. It follows already that $S^n$ 
is the unique conformal compactification of~${\mathbb{R}}^n$. 

\section{Puncture repair near the Euclidean metric}
The estimates in the previous section are sufficiently robust that they apply
for metrics sufficiently close to Euclidean. More specifically, suppose
$g_{ab}(x)\,dx^adx^b$ is a Riemannian metric on a punctured ball in
${\mathbb{R}}^n$ centred on the origin and such that, in standard Cartesian
co\"ordinates $(x^1,x^2,\ldots,x^n)\in{\mathbb{R}}^n$ with standard Euclidean
metric $\eta_{ab}\,dx^adx^b$,
\begin{equation}\label{close_to_Euclid}
\begin{tabular}{l}
$\bullet$\enskip the volume form for $g_{ab}$ is the standard Euclidean one,\\
$\bullet$\enskip the metrics $g_{ab}(x)$ and $\eta_{ab}$ satisfy\\[4pt]
\qquad\qquad
$\|X\|_{g(x)}\leq2\|X\|_\eta\leq 
4\|X\|_{g(x)},\quad\forall\,\mbox{vectors }X$\\[4pt]
for all $x$ near the origin, say for $\|x\|_\eta<\epsilon$.
\end{tabular}
\end{equation}

Again working in spherical coordinates near the origin, if
$\hat{g}_{ab}\equiv\Omega^2g_{ab}$ on $U$ extends to $N$, then the
concentric hypersurfaces $\{r=\epsilon\}$ as $\epsilon\downarrow 0$ have
diameter bounded away from zero in the metric $\hat{g}_{ab}$ and hence also in
the commensurate metric $\hat{\eta}_{ab}\equiv\Omega^2\eta_{ab}$. Therefore,
according to the proof given in the previous section, the volume of the collar 
$\{0<r<\epsilon\}$ with respect to the metric $\hat\eta_{ab}$ is infinite. But 
$\hat\eta_{ab}$ has the same volume form as $\hat{g}_{ab}$, which contradicts 
$\hat{g}_{ab}$ extending to~$N$.

\section{Puncture repair in $n$ dimensions}
\noindent{\em Proof of Theorem~\ref{2} in general.}\enskip We only need show
that there are local co\"ordinates on a arbitrary Riemannian manifold so that
conditions (\ref{close_to_Euclid}) are satisfied. Certainly, we can arrange
local co\"ordinates so that $g_{ab}$ agrees with $\eta_{ab}$ at the origin. 
The volume form for $g_{ab}$ is then
$$F(x^1,x^2\cdots,x^n)\,dx^1\wedge dx^2\wedge\cdots\wedge dx^n$$
for some smooth function $F$ with~$F(0)=1$, which can be absorbed by changing
just the first co\"ordinate. The condition that $g_{ab}$ and $\eta_{ab}$ are
commensurate, as in~(\ref{close_to_Euclid}), follows near the origin by
continuity. \hfill$\square$

\appendix
\section{Proof of Lemma~\ref{keylemma}}
In fact, we shall prove the following minor generalisation. 
\begin{lemma} Let $\Sigma$ denote the unit $m$-sphere with its usual round 
metric. Suppose $\Omega:\Sigma\to{\mathbb{R}}_{>0}$ is
smooth and there are two points $\alpha,\beta\in\Sigma$ such that
$\int_\alpha^\beta\Omega\geq d>0$ for all smooth paths on $\Sigma$ joining
$\alpha$ to~$\beta$. Then, for any $s>m$, 
$$\int_\Sigma\Omega^s\geq C_{m,s}\,d^s,$$
where $C_{m,s}$ is a universal positive constant, independent of the location 
of $\alpha$ and~$\beta$.
\end{lemma}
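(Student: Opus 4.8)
The plan is to reduce the statement to a Sobolev--Morrey inequality for a single auxiliary function, after which the whole force of the lemma comes from the fact that $s>m$ is exactly the borderline exponent for that inequality. First I would trade the hypothesis on path integrals of $\Omega$ for a pointwise gradient bound on the conformal distance function. Define $u(x)$ to be the infimum, over smooth paths $\gamma$ from $\alpha$ to $x$, of the length $\int_\gamma\Omega$ measured in the conformal metric $\hat g=\Omega^2 g$; thus $u$ is precisely the $\hat g$-distance from $\alpha$. Then $u(\alpha)=0$, the hypothesis gives $u(\beta)\ge d$, and $u$ is Lipschitz because $\Omega$ is smooth and positive on the compact $\Sigma$. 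At every point of differentiability one has $|\nabla_{\hat g}u|_{\hat g}\le 1$, which, since $\hat g^{ab}=\Omega^{-2}g^{ab}$, reads $|\nabla_g u|_g\le\Omega$ in the round metric. Consequently
$$\int_\Sigma\Omega^s\ \ge\ \int_\Sigma|\nabla_g u|_g^{\,s}\ =\ \|\nabla u\|_{L^s(\Sigma)}^{\,s},$$
so it suffices to bound the gradient integral of $u$ from below.

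For the second step I would invoke Morrey's inequality on the closed manifold $\Sigma$. Since $s>m=\dim\Sigma$, functions in $W^{1,s}(\Sigma)$ are Hölder continuous and there is a constant $C=C_{m,s}$, depending only on $m$, $s$ and the fixed geometry of the round sphere, such that $|u(x)-u(y)|\le C\,d_g(x,y)^{1-m/s}\|\nabla u\|_{L^s(\Sigma)}$ for \emph{all} pairs $x,y$ simultaneously. Applying this to $x=\alpha$, $y=\beta$, using $u(\beta)-u(\alpha)\ge d$ and $d_g(\alpha,\beta)\le\operatorname{diam}\Sigma=\pi$, yields $d\le C\pi^{1-m/s}\|\nabla u\|_{L^s(\Sigma)}$. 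Combined with the previous display this gives $\int_\Sigma\Omega^s\ge C'_{m,s}\,d^s$ with $C'_{m,s}=(C\pi^{1-m/s})^{-s}$. The advertised independence of the location of $\alpha,\beta$ is then automatic: Morrey's inequality holds with one constant valid for every pair of points, and the dependence on the true separation $d_g(\alpha,\beta)$ has been discarded in favour of the fixed diameter~$\pi$.

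I expect the genuine content to lie not in the inequality but in recognising that $s>m$ is precisely Morrey's threshold, which is also why the result is sharp. The reduction to $\|\nabla u\|_{L^s}$ is soft, and the one technical point to nail down is the eikonal bound $|\nabla_g u|_g\le\Omega$ holding almost everywhere (Rademacher differentiability of the Lipschitz distance function). If one prefers to avoid citing the embedding, I would instead reprove the Hölder estimate by hand: working in a geodesic ball containing both points, represent $u(\alpha)-u(\beta)$ by a Riesz potential of $|\nabla_g u|$ and apply Hölder with exponent~$s$, the relevant integral of the potential kernel to the power $s'=s/(s-1)$ converging exactly when $s>m$. The only real care in that route is the degeneration of geodesic polar coordinates near antipodal points, which one sidesteps by centring the ball at a midpoint of $\alpha$ and $\beta$. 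In either approach the hypothesis $s>m$ is essential rather than cosmetic: for $s\le m$ one can route an arbitrarily large conformal distance through a thin tube carrying arbitrarily small $\int_\Sigma\Omega^s$, so no lower bound of this form can survive.
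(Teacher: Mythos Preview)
Your argument is correct and takes a genuinely different route from the paper. You package the path-integral hypothesis as a pointwise gradient bound on the conformal distance function $u=d_{\hat g}(\alpha,\cdot)$, obtaining $|\nabla_g u|_g\le\Omega$ almost everywhere, and then invoke the Morrey embedding $W^{1,s}(\Sigma)\hookrightarrow C^{0,1-m/s}(\Sigma)$ (valid precisely when $s>m$) to convert the oscillation $u(\beta)-u(\alpha)\ge d$ into a lower bound on $\|\nabla u\|_{L^s}$. The paper instead works explicitly in stereographic coordinates with $\alpha$ at the origin and $\beta$ at infinity, applies H\"older along each radial ray with a carefully chosen weight, and then integrates over directions; the uniform bound on the resulting one-dimensional integrals is isolated as a separate technical lemma. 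Your approach is shorter and more conceptual (and transparently works on any fixed compact Riemannian manifold, not just the round sphere), at the cost of citing the Sobolev--Morrey inequality as a black box; the paper's approach is entirely elementary but pays for that with the auxiliary lemma. Your proposed ``by hand'' alternative---writing $u(\beta)-u(\alpha)$ as a Riesz potential of $|\nabla u|$ and applying H\"older with exponent $s$---is in fact close in spirit to what the paper does: their ray decomposition and weighted H\"older estimate amount to a coordinate-specific version of exactly that computation, with their bound on $A_v$ playing the role of the convergence of $\int|x|^{-(m-1)s/(s-1)}\,dx$.
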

\begin{proof}
We shall calculate using stereographic co\"ordinates on~$\Sigma$. Recall that 
the round metric on the unit $m$-sphere may be written as
$$\left(\frac{2}{1+(u^1)^2+\cdots+(u^m)^2}\right)^2
\Big((du^1)^2+\cdots+(du^m)^2\Big)$$
in these co\"ordinates. Translating the origin to $a\in{\mathbb{R}}^m$
gives
$$\left(\frac{2}{1+\|u+a\|^2}\right)^2
\Big((du^1)^2+\cdots+(du^m)^2\Big)$$
instead and we may use such a translation to suppose that $\alpha$ and $\beta$ 
are located at 
the origin and out at infinity in this stereographic projection. Let 
$v\in{\mathbb{R}}^m$ be a unit vector and consider the curve
$$(0,\infty)\ni \rho\mapsto\rho v$$
joining the origin to infinity. Then 
$$d\leq\int_\alpha^\beta\Omega
=\int_0^\infty \frac{2\Omega\,d\rho}{1+\|\rho v+a\|^2}.$$
The H\"older inequality for conjugate exponents $s$ and $s/(s-1)$ implies that
$$\left(\int_0^\infty\!\!fg\,d\mu\right)^s\leq
\left(\int_0^\infty\!\!f^s\,d\mu\right)
\left(\int_0^\infty\!\!g^{s/(s-1)}\,d\mu\right)^{s-1}$$
and if we take 
$$f=\Omega,\quad g=\frac{(1+\|\rho v+a\|^2)^{m-1}}{\rho^{m-1}},\quad
d\mu=\frac{\rho^{m-1}\,d\rho}{(1+\|\rho v+a\|^2)^m}$$
then we conclude that
$$\left(\int_0^\infty\frac{\Omega\,d\rho}{1+\|\rho v+a\|^2}\right)^s\leq
A_{v}{}^{s-1}
\int_0^\infty\frac{\Omega^s\,\rho^{m-1}\,d\rho}{(1+\|\rho v+a\|^2)^m},$$ 
where
$$A_v\equiv\int_0^\infty
\frac{d\rho}{\rho^{(m-1)/(s-1)}(1+\|\rho v+a\|^2)^{(s-m)/(s-1)}}.$$
But 
$$\|\rho v+a\|^2=(\rho+\langle a,v\rangle)^2+\|a-\langle a,v\rangle v\|^2\geq
(\rho+\langle a,v\rangle)^2$$
so we conclude that
$$A_v\leq
\int_0^\infty
\frac{d\rho}{\rho^{(m-1)/(s-1)}(1+(\rho+\langle a,v\rangle)^2)^{(s-m)/(s-1)}}$$
an integral that is bounded,
say by $B$, independent of 
$\langle a,v\rangle\in{\mathbb{R}}$, by dint of Lemma~\ref{tricky_lemma} below.
We conclude that
$$\int_0^\infty\frac{\Omega^s\,\rho^{m-1}\,d\rho}{(1+\|\rho v+a\|^2)^m}\geq
\frac1{B^{s-1}}
\left(\int_0^\infty\frac{\Omega\,d\rho}{1+\|\rho v+a\|^2}\right)^s.$$
Recall that $v$ is an arbitrarily chosen unit vector in ${\mathbb{R}}^m$. 
If we integrate over all such vectors, then the left hand side of this 
inequality yields $\int_\Sigma\Omega^s$ whereas we already know that 
$$\int_0^\infty\frac{\Omega\,d\rho}{1+\|\rho v+a\|^2}\geq\frac{d}2.$$
Therefore $\int_\Sigma\Omega^s\geq(S_{m-1}/(2^sB^{s-1}))\,d^s$, where 
$S_{m-1}$ denotes the area of the unit $(m-1)$-sphere. This is a bound of the
required form.
\end{proof}

\begin{lemma}\label{tricky_lemma}
If $p,q>1$ are conjugate exponents, $1/p+1/q=1$, then 
$${\mathbb{R}}\ni
t\longmapsto\int_0^\infty\frac{dx}{x^{1/p}(1+(x+t)^2)^{1/q}}$$
is bounded.\end{lemma}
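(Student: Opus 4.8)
Write $J(t)=\int_0^\infty x^{-1/p}\,(1+(x+t)^2)^{-1/q}\,dx$. The plan is to prove that $J$ is finite and continuous on all of $\mathbb{R}$ and that it stays bounded as $t\to\pm\infty$; a continuous function on $\mathbb{R}$ with bounded tails is bounded. Finiteness for each fixed $t$ is read off from the two exponents: near $x=0$ the factor $(1+(x+t)^2)^{-1/q}$ is bounded while $x^{-1/p}$ is integrable because $1/p<1$, and as $x\to\infty$ the integrand decays like $x^{-1/p-2/q}=x^{-1-1/q}$, which is integrable because $1/q>0$. For continuity on a compact interval $[-T,T]$ I would dominate the integrand by the fixed integrable function equal to $x^{-1/p}$ on $(0,2T]$ (using $1+(x+t)^2\ge1$) and to $4^{1/q}x^{-1-1/q}$ on $(2T,\infty)$ (using $x+t\ge x/2$ there), and then apply dominated convergence. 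This already bounds $J$ on every compact interval.

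It remains to control the two ends. For $t\ge0$ one has $(x+t)^2\ge x^2$ for all $x\ge0$, so $J(t)\le J(0)<\infty$ and $J$ is bounded on $[0,\infty)$ with no further work. The genuine issue is $t\to-\infty$. Writing $t=-s$ with $s>0$, the denominator $1+(x-s)^2$ is smallest near $x=s$, so I would split the integral at $x=s/2$ and $x=3s/2$ (equivalently, rescale $x=s\xi$ and split at $\xi=1/2$ and $\xi=3/2$). On the two outer pieces $1+(x-s)^2$ is comparable to $s^2$ and to $x^2$ respectively, and the resulting elementary integrals are each $O(s^{-1/q})$ as $s\to\infty$.

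The middle region is where the real content lies. On $x\in(s/2,3s/2)$ I would estimate $x^{-1/p}\le(s/2)^{-1/p}$ and substitute $u=x-s$, reducing this piece to a constant times $s^{-1/p}\int_{-s/2}^{s/2}(1+u^2)^{-1/q}\,du$. The last integral is bounded, grows logarithmically, or grows like $s^{1-2/q}$ according as $q<2$, $q=2$, or $q>2$, and the only place the conjugacy $1/p+1/q=1$ is truly needed is the exponent count in the worst case $q>2$: there the piece behaves like $s^{-1/p}s^{1-2/q}=s^{1-1/p-2/q}=s^{-1/q}\to0$, because $1-1/p=1/q$. Hence $J(-s)\to0$ as $s\to\infty$, so $J$ is bounded on $(-\infty,0]$ as well, and the lemma follows. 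I expect the only delicate point to be the bookkeeping in this middle region — in particular, either treating the three cases $q<2$, $q=2$, $q>2$ separately or folding them into one uniform bound; everything else is routine.
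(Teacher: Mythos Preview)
Your argument is correct. The finiteness and continuity are handled cleanly, the bound $J(t)\le J(0)$ for $t\ge0$ is immediate, and your three-piece decomposition for $t=-s\to-\infty$ checks out in all three regimes of~$q$; in fact you obtain the sharper conclusion $J(-s)\to0$.

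The paper's proof is genuinely different and rather slicker: it observes that $2p-1$ and $(q+1)/2$ are themselves conjugate exponents and applies Young's inequality $ab\le a^{2p-1}/(2p-1)+2b^{(q+1)/2}/(q+1)$ to the product $x^{-1/p}\cdot(1+(x+t)^2)^{-1/q}$. After splitting only once at $x=1$, this yields an explicit bound
\[
\frac{2p^2}{(2p-1)(p-1)}+\frac{2}{q+1}\int_{-\infty}^\infty\frac{dx}{(1+x^2)^{(q+1)/2q}}
\]
valid uniformly in~$t$, with no case analysis and no appeal to continuity or tail behaviour. Your approach buys the limiting value at $-\infty$ and uses nothing beyond elementary comparison; the paper's approach buys a one-line uniform estimate and an explicit constant, at the cost of spotting the clever choice of Young exponents.
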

\begin{proof}
Firstly, note that $(q+1)/2q>1/2$, so
$$\int_1^\infty\frac{dx}{(1+(x+t)^2)^{(q+1)/2q}}\leq
\int_{-\infty}^\infty\frac{dx}{(1+x^2)^{(q+1)/2q}}<\infty.$$
Since $p$ and $q$ are conjugate exponents, so are
$2p-1$ and $(q+1)/2$, and we may apply Young's inequality with these exponents 
to conclude that 
$$\frac{1}{x^{1/p}(1+(x+t)^2)^{1/q}}\leq
\frac{1}{2p-1}\frac{1}{x^{(2p-1)/p}}
+\frac{2}{q+1}\frac{1}{(1+(x+t)^2)^{(q+1)/2q}}.$$
Also, observe that 
$$\int_0^1\frac{dx}{x^{1/p}}
=\int_1^\infty\frac{dx}{x^{(2p-1)/p}}
=\frac{p}{p-1}.$$
Therefore,
$$\begin{array}{rcl}\displaystyle
\int_0^\infty\frac{dx}{x^{1/p}(1+(x+t)^2)^{1/q}}
&\leq&\displaystyle\int_0^1\frac{dx}{x^{1/p}}
+\int_1^\infty\frac{dx}{x^{1/p}(1+(x+t)^2)^{1/q}}\\[12pt]
&=&\displaystyle\frac{p}{p-1}
+\int_1^\infty\frac{dx}{x^{1/p}(1+(x+t)^2)^{1/q}}
\end{array}$$
whilst Young's inequality shows that the second integral is bounded above by
$$\frac{1}{2p-1}\int_1^\infty
\frac{dx}{x^{(2p-1)/p}}
+\frac{2}{q+1}\int_1^\infty\frac{dx}{(1+(x+t)^2)^{(q+1)/2q}}.$$
Assembling these various estimates gives
$$\frac{2p^2}{(2p-1)(p-1)}
+\frac{2}{q+1}\int_{-\infty}^\infty\frac{dx}{(1+x^2)^{(q+1)/2q}}$$
as a bound on the original integral.\end{proof}
\subsection*{Acknowledgement} We are extremely grateful to Ben Moore and Nick 
Buchdahl for helpful discussions concerning the proof of 
Lemma~\ref{tricky_lemma}.


\begin{thebibliography}{11}

\bibitem{F} C. Frances, 
{\em Removable and essential singular sets for higher dimensional conformal 
maps}, 
Comment. Math. Helv. {\bf 89} (2014) 405--441.

\end{thebibliography}
\end{document}